\newcommand{\Peak}{\text{Peak}}
\newtheorem{theorem}{Theorem}[section]
\newtheorem{proposition}[theorem]{Proposition}
\newtheorem{lemma}[theorem]{Lemma}
\theoremstyle{definition}
\newtheorem{definition}[theorem]{Definition} 
\newtheorem{example}[theorem]{Example}
\newtheorem*{solution*}{Solution}
\theoremstyle{remark}
\newtheorem{remark}[theorem]{Remark}
\title{Metrics on permutations with the same peak set}
\author{Alexander Diaz-Lopez, Kathryn Haymaker, Kathryn Keough, Jeongbin Park, Edward White} 
\begin{document}

\maketitle

\begin{abstract}
 Let $S_n$ be the symmetric group on the set $\{1,2,\ldots,n\}$. Given a permutation $\sigma=\sigma_1\sigma_2 \cdots \sigma_n \in S_n$, we say it has a peak at index $i$ if $\sigma_{i-1}<\sigma_i>\sigma_{i+1}$. Let $\Peak(\sigma)$ be the set of all peaks of $\sigma$ and define $P(S;n)=\{\sigma\in S_n\, | \,\Peak(\sigma)=S\}$. In this paper we study the Hamming metric, $\ell_\infty$-metric, and Kendall-Tau metric on the sets $P(S;n)$ for all possible $S$, and determine the minimum and maximum possible values that these metrics can attain in these subsets of $S_n$.    
\end{abstract}

Keywords: permutations, peaks, Hamming metric, Kendall-Tau metric, L-infinity metric
\section{Introduction}
In this article we look at various sets of permutations and measure maximum and minimum distances between the elements in these sets. Let $S_n$ be the symmetric group, that is, the set of $n!$ symmetries of $\{1,2,\ldots, n\}$. We write the elements of $S_n$ in one-line notation, so for $\sigma\in S_n$  we write $\sigma=\sigma_{1}\sigma_{2}\cdots\sigma_{n}$ to denote the permutation that sends $1\rightarrow \sigma_{1},$ $2\rightarrow \sigma_{2},\ldots, n\rightarrow \sigma_{n}$.
We say $\sigma$ has a \textbf{peak} at position $i$ in $\{2,3,\ldots, n-1\}$ if $\sigma_{i-1}<\sigma_{i}>\sigma_{i+1}$, that is, $\sigma_i$ is greater than its two neighbors.  We define the \textbf{peak set} of $\sigma$, $\Peak(\sigma)$, to be the set of all indices at which $\sigma$ has a peak. For example, if $\sigma=58327164 \in S_8$ then $\Peak(\sigma)=\{2,5,7\}$.

We can collect all permutations that have the same peak set  $S$ and define
\[P(S;n)=\{\sigma \in S_n \, | \, \Peak(\sigma)=S\}.\]
We can partition $S_n$ as a disjoint union of sets of the form $P(S;n)$ as we range through all possible peak sets $S$.  The main purpose of this article is to describe the maximum and minimum distances for each subset $P(S;n)$ under three different metrics: the Hamming metric, $\ell_\infty$-metric, and Kendall-Tau metric. We report our results in Proposition \ref{prop:minPS} and Theorems \ref{thm:maxK}, \ref{thm:maxell}, and \ref{thm:maxH}.

Our study was motivated by recent work on peaks of permutations. The sets $P(S;n)$ were first studied by  
Nyman in \cite{n03} to show that sums of permutations with the same peak set form a subalgebra of the group algebra of $S_n$ (over $\mathbb{Q}).$ Later, Billey, Burdzy, and Sagan \cite{bbs13} studied the cardinality of the sets $P(S;n)$ and showed 
\[|P(S;n)|=2^{n-|S|-1}p_S(n),\]
where $p_S(n)$ is a polynomial in $n$ known as the peak polynomial of $S$. The study of these polynomials has led to a flurry of work such as \cite{bft16,dhio17,dhip17,gg21,o20}.

Permutations can be used to rank a collection of objects or quantities, and different notions of distances between pairs of permutations have been studied extensively \cite{dh98, kg90}. More recent applications of permutations include data representation, for example in flash memory storage. In the context of data representation,  the Hamming metric, the $\ell_{\infty}$   metric, and the Kendall-Tau metric have all been considered \cite{bm10,ck69,kltt10}. 

\section{Metrics on $S_n$}
In this section we will formally define the metrics we use to measure the distance between two permutations. First we recall the definition of a metric. 
Given a set $S$, a metric $d$ on $S$ is a map $d:S \times S \to [0,\infty)$ such that for $\sigma,\rho,\tau \in S$,
\begin{enumerate}
    \item $d(\sigma,\rho)=0$ if and only if $\sigma=\rho$,
    \item $d(\sigma,\rho)=d(\rho,\sigma)$,
    \item $d(\sigma,\tau)\leq d(\sigma,\rho)+d(\rho,\tau)$.
\end{enumerate}
In this article, we will use three metrics: the \textit{Hamming metric},  \textit{$\ell_\infty$-metric}, and \textit{Kendall-Tau metric}.

\begin{definition}\label{def:metrics}
Let $d_H$, denoting the \textbf{Hamming metric}, be the map $d_H:S_n \times S_n \to [0,\infty)$ such that $d_H(\sigma,\rho)$ is the number of indices where $\sigma$ and $\rho$ differ. That is, if $\sigma=\sigma_1\sigma_2\ldots \sigma_n$ and $\rho=\rho_1\rho_2\ldots \rho_n$ then 
\[d_H(\sigma,\rho)=|\{i \, | \, \sigma_i\neq \rho_i\}|.\]
Let $d_\ell$, denoting the  \textbf{$\ell_\infty$-metric}, be the map $d_{\ell}:S_n \times S_n \to [0,\infty)$ such that 
\[d_{\ell}(\sigma,\rho)=\max\{|\sigma_i - \rho_i| \, | \, 1\leq i \leq n\}.\]
Let $d_K$, denoting the \textbf{Kendall-Tau metric}, be the map $d_K:S_n \times S_n \to [0,\infty)$ such that 
$d_K(\sigma,\rho)$ is the number of pairs $(i,j)$ such that $1\leq i <j \leq n$ and $(\sigma_i-\sigma_j)(\rho_i-\rho_j)<0$. The pairs $(i,j)$ counted by $d_K$ are called \textit{deranged pairs}.
\end{definition}
\begin{example}\label{ex:metrics}
Consider $\sigma,\rho \in S_5$ where $\sigma=14325$ and $\rho=25314$. Then, $\sigma$ and $\rho$ differ in four of the five entries, thus $d_H(\sigma, \rho)=4$. The differences between the indices of $\sigma$ and $\rho$ are $|1-2|, |4-5|,|3-3|,|2-1|,|5-4|,$ thus $d_\ell(\sigma, \rho)=1$. Finally, out of the 10 possible pairs $(i,j)$ with $1\leq i<j\leq 5$, only $(1,4)$ and $(2,5)$ satisfy that $(\sigma_i-\sigma_j)(\rho_i-\rho_j)<0$, hence  $d_K(\sigma, \rho)=2$.

\end{example}

It is worth noting that the Kendall-Tau metric has an alternative description which is helpful in some contexts. For permutations $\sigma,\rho \in S_n$, let $d_K'(\sigma,\rho)$ be the minimum number of swaps of the form $(i,i+1)$ that transform $\sigma$ into $\rho$, that is, $d_K'(\sigma,\rho)$ is the minimum number $n$ such that there exist transpositions $\tau_1,\ldots, \tau_n$ of the form $(i,i+1)$ with $\tau_n\cdots\tau_1\sigma=\rho$. In Proposition \ref{prop:KTequiv} we show that $d_K(\sigma,\rho)=d'_K(\sigma,\rho)$, for all $\sigma,\rho\in S_n$. For example, for the permutations $\sigma=14325$ and $\rho=25314$ in Example \ref{ex:metrics}, we can swap $1$ and $2$ and then swap $4$ and $5$ to convert $\sigma$ into $\rho$.

We now present two lemmas, one about $d_K$ and one about $d'_K$, that will be helpful in proving Proposition \ref{prop:KTequiv}. 

\begin{lemma}\label{lem:KTrightinvariant}
The Kendall-Tau metric is right invariant, that is, for any $\sigma,\tau, \alpha\in S_n,$ $d_K(\sigma, \rho)=d_K(\sigma\alpha, \rho\alpha)$.
\end{lemma}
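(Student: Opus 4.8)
The plan is to unpack both sides in terms of the definition of $d_K$ as a count of deranged pairs, and show that composing with $\alpha$ on the right simply relabels the index pairs without changing how many of them are deranged. Recall that $d_K(\sigma,\rho)$ counts the pairs $(i,j)$ with $i<j$ for which $\sigma_i,\sigma_j$ and $\rho_i,\rho_j$ are in opposite relative orders, i.e. $(\sigma_i-\sigma_j)(\rho_i-\rho_j)<0$. The key observation is that right multiplication by $\alpha$ permutes the \emph{positions} being compared: by the one-line-notation conventions of the paper, $(\sigma\alpha)_k = \sigma_{\alpha_k}$, so the $k$-th entry of $\sigma\alpha$ is the $\alpha_k$-th entry of $\sigma$.

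First I would fix $\alpha\in S_n$ and set up the natural bijection between unordered pairs of positions. For a pair $\{k,l\}$ with $k\neq l$, comparing the $k$-th and $l$-th entries of $\sigma\alpha$ and $\rho\alpha$ means comparing the $\alpha_k$-th and $\alpha_l$-th entries of $\sigma$ and $\rho$. I would therefore define the map $\{k,l\}\mapsto\{\alpha_k,\alpha_l\}$ on the set of two-element subsets of $\{1,\dots,n\}$; since $\alpha$ is a bijection, this is a bijection on two-element subsets. Next I would check that a pair is counted by $d_K(\sigma\alpha,\rho\alpha)$ if and only if its image is counted by $d_K(\sigma,\rho)$: the quantity $((\sigma\alpha)_k-(\sigma\alpha)_l)((\rho\alpha)_k-(\rho\alpha)_l) = (\sigma_{\alpha_k}-\sigma_{\alpha_l})(\rho_{\alpha_k}-\rho_{\alpha_l})$ is exactly the product whose sign determines whether $\{\alpha_k,\alpha_l\}$ is deranged for the pair $(\sigma,\rho)$. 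Hence deranged pairs for $(\sigma\alpha,\rho\alpha)$ correspond bijectively to deranged pairs for $(\sigma,\rho)$, giving $d_K(\sigma\alpha,\rho\alpha)=d_K(\sigma,\rho)$.

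A small bookkeeping point to handle carefully is that $d_K$ is defined as a count over \emph{ordered} pairs $(i,j)$ with $i<j$, whereas the sign of the product $(\sigma_i-\sigma_j)(\rho_i-\rho_j)$ is symmetric in $i$ and $j$. I would address this by noting that the condition depends only on the unordered pair $\{i,j\}$, so it is cleanest to phrase the entire argument in terms of two-element subsets and only at the end observe that each such subset corresponds to exactly one ordered pair with the smaller index first. This is the only genuinely fiddly step; the rest is a direct substitution using $(\sigma\alpha)_k=\sigma_{\alpha_k}$.

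The main obstacle, such as it is, is purely notational rather than mathematical: one must be consistent about the convention $(\sigma\alpha)_k=\sigma_{\alpha_k}$ and make sure the bijection $\{k,l\}\mapsto\{\alpha_k,\alpha_l\}$ is applied in the correct direction when translating between the two counts. Once the substitution is written out, the equality of the two sign products is immediate, and right invariance follows with no further computation. I would close by remarking that this invariance is exactly what is needed to reduce the general computation of $d_K(\sigma,\rho)$ to the special case $\rho\alpha=\mathrm{id}$, i.e. $\alpha=\rho^{-1}$, which is the form in which the lemma will be used to prove Proposition \ref{prop:KTequiv}.
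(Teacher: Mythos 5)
Your proof is correct and is essentially the paper's own argument: both show that right multiplication by $\alpha$ induces a sign-preserving bijection on index pairs, the only cosmetic difference being that you transport pairs from $(\sigma\alpha,\rho\alpha)$ to $(\sigma,\rho)$ via $\{k,l\}\mapsto\{\alpha_k,\alpha_l\}$, whereas the paper goes in the opposite direction via $\alpha^{-1}$ and sorts each resulting pair instead of working with unordered pairs. No gaps.
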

\begin{proof}
    Let $(i,j)$ be any pair such that $1\leq i <j\leq n$. Consider the pair $(\alpha^{-1}(i), \alpha^{-1}(j))$ or $(\alpha^{-1}(j), \alpha^{-1}(i))$, whichever has the first entry greater than the second entry. Without loss of generality, assume it is $(\alpha^{-1}(i), \alpha^{-1}(j))$. Then, 
     \begin{equation}\label{eq:rightinv}(\sigma\alpha_{\alpha^{-1}(i)} -\sigma\alpha_{\alpha^{-1}(j)})(\rho\alpha_{\alpha^{-1}(i)} - \rho\alpha_{\alpha^{-1}(j)}) = (\sigma_i-\sigma_j)(\rho_i - \rho_j).  \end{equation}
     Thus, if $(i,j)$ is a deranged pair for $(\sigma,\rho)$ then $(\alpha^{-1}(i), \alpha^{-1}(j))$ is a deranged pair for $(\sigma\alpha, \rho\alpha)$. Similarly, if $(i,j)$ is not a deranged pair for $(\sigma,\rho)$ (meaning $(\sigma_i-\sigma_j)(\rho_i - \rho_j)>0$) then by Equation \eqref{eq:rightinv} neither $(\alpha^{-1}(i), \alpha^{-1}(j))$ nor $(\alpha^{-1}(j), \alpha^{-1}(i))$ are deranged pairs for $(\sigma\alpha, \rho\alpha)$. Since both $(\sigma,\rho)$ and $(\sigma\alpha, \rho\alpha)$ have the same number of deranged pairs, then $d_K(\sigma, \rho)=d_K(\sigma\alpha, \rho\alpha)$.
\end{proof}
\begin{lemma}\label{lem:d_K'}
For $\sigma,\rho,\alpha \in S_n$, the following statements hold:
\begin{enumerate}[(a)]
    \item $d_K'(\sigma,\rho)=d_K'(\sigma\alpha,\rho\alpha)$,\label{part:rightinv}
    \item $d_K'(\sigma,\rho)=d_K'(\rho,\sigma)$,\label{part:sym}
\end{enumerate}
\end{lemma}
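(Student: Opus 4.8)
The plan is to argue both parts directly from the definition of $d_K'$ as a minimal word length in adjacent transpositions, using only two elementary facts: that right multiplication by a fixed $\alpha$ is a bijection of $S_n$ which does not interfere with transpositions acting on the left, and that every adjacent transposition is an involution. In particular I would record at the outset that adjacent transpositions generate $S_n$, so a finite word carrying $\sigma$ to $\rho$ always exists and each quantity $d_K'$ below is well-defined and finite.

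For part (\ref{part:rightinv}), I would start from a shortest factorization realizing the distance. Writing $m=d_K'(\sigma,\rho)$, choose adjacent transpositions $\tau_1,\dots,\tau_m$ with $\tau_m\cdots\tau_1\sigma=\rho$. Right-multiplying this identity by $\alpha$ gives $\tau_m\cdots\tau_1(\sigma\alpha)=\rho\alpha$, so the very same word of length $m$ carries $\sigma\alpha$ to $\rho\alpha$; hence $d_K'(\sigma\alpha,\rho\alpha)\le d_K'(\sigma,\rho)$. The reverse inequality follows by applying this same statement to the pair $(\sigma\alpha,\rho\alpha)$ with the group element $\alpha^{-1}$, since $(\sigma\alpha)\alpha^{-1}=\sigma$ and $(\rho\alpha)\alpha^{-1}=\rho$. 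Combining the two inequalities yields the claimed equality.

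For part (\ref{part:sym}), I would again take a shortest word $\tau_m\cdots\tau_1$ with $\tau_m\cdots\tau_1\sigma=\rho$, where $m=d_K'(\sigma,\rho)$. Because each $\tau_i$ is an adjacent transposition it satisfies $\tau_i^{-1}=\tau_i$, so inverting the relation gives $\sigma=\tau_1\cdots\tau_m\,\rho$. Setting $\mu_k=\tau_{m-k+1}$ produces adjacent transpositions with $\mu_m\cdots\mu_1\,\rho=\sigma$, a word of the same length $m$ taking $\rho$ to $\sigma$. Therefore $d_K'(\rho,\sigma)\le d_K'(\sigma,\rho)$, and interchanging the roles of $\sigma$ and $\rho$ gives the opposite inequality, hence equality.

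I do not expect a genuine obstacle here; the only place demanding care is the bookkeeping of the composition order when inverting the word in part (\ref{part:sym}), where reversing the sequence via $\mu_k=\tau_{m-k+1}$ and invoking the involution property $\tau_i^{-1}=\tau_i$ must be tracked simultaneously, so that the resulting word has exactly $m$ letters and none are silently introduced or lost.
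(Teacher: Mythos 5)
Your proposal is correct and follows essentially the same route as the paper: part (\ref{part:rightinv}) by right-multiplying a transforming word by $\alpha$ (and by $\alpha^{-1}$ for the reverse inequality), and part (\ref{part:sym}) by inverting the word using the involutivity of adjacent transpositions. Your version is slightly more meticulous about minimal-length words and reindexing, but the underlying argument is identical.
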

\begin{proof}
Let $\tau_1,\tau_2,\ldots, \tau_n$ be any  collection of transpositions of the form $(i,i+1)$ that transforms $\sigma$ into $\rho$, that is,  $\tau_n\cdots\tau_2\tau_1\sigma=\rho.$ Multiplying by $\alpha$ on both sides we get  $\tau_n\cdots\tau_2\tau_1\sigma\alpha=\rho\alpha,$ thus $d_K'(\sigma,\rho)\geq d_K'(\sigma\alpha,\rho\alpha)$. Similarly, let  $\tau_1',\tau_2',\ldots, \tau_m'$ be any collection of transpositions of the form $(i,i+1)$ that transforms $\sigma\alpha$ into $\rho\alpha$, that is, $\tau_n'\cdots \tau_2'\tau_1'\sigma\alpha=\rho\alpha$. Multiplying by $\alpha^{-1}$ on the right we get $\tau_n'\cdots \tau_2'\tau_1'\sigma=\rho$. Thus, $d_K'(\sigma,\rho)\leq d_K'(\sigma\alpha,\rho\alpha)$, which completes the proof of part ($\ref{part:rightinv}$).

Part ($\ref{part:sym}$) follows from the fact that  for any collection $\tau_1,\tau_2,\ldots, \tau_n$ of transpositions of the form $(i,i+1)$ we have that if $\tau_n\cdots \tau_1\sigma=\rho$ then $ \sigma=\tau_1\cdots \tau_n\rho$. Thus, the minimum number of swaps of the form $(i,i+1)$ that transforms $\sigma$ into $\rho$ is the minimum number of swaps of the form $(i,i+1)$ that transforms $\rho$ into $\sigma$.
\end{proof}

We are now ready to prove that both $d_K$ and $d'_K$ are the same metric.

\begin{proposition} \label{prop:KTequiv}
For $\sigma, \rho \in S_n$, the value $d_K(\sigma, \rho)=d_K'(\sigma,\rho)$.
\end{proposition}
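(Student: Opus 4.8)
The plan is to show the two quantities agree by establishing inequalities in both directions, using the right-invariance results (Lemma \ref{lem:KTrightinvariant} and Lemma \ref{lem:d_K'}\eqref{part:rightinv}) to reduce to the special case where one of the permutations is the identity. Concretely, by right-invariance I would multiply both $\sigma$ and $\rho$ on the right by $\sigma^{-1}$: since $d_K(\sigma,\rho)=d_K(\mathrm{id},\rho\sigma^{-1})$ and $d_K'(\sigma,\rho)=d_K'(\mathrm{id},\rho\sigma^{-1})$, it suffices to prove $d_K(\mathrm{id},\pi)=d_K'(\mathrm{id},\pi)$ for a single permutation $\pi=\rho\sigma^{-1}$. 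In this reduced setting, $d_K(\mathrm{id},\pi)$ is simply the number of inversions of $\pi$, i.e.\ the number of pairs $i<j$ with $\pi_i>\pi_j$, because a pair $(i,j)$ is deranged for $(\mathrm{id},\pi)$ exactly when $\mathrm{id}$ and $\pi$ disagree on the relative order of positions $i$ and $j$.

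First I would prove $d_K'(\mathrm{id},\pi)\ge d_K(\mathrm{id},\pi)$ by tracking how the inversion count changes under a single adjacent transposition. An adjacent swap $(i,i+1)$ applied to the current permutation exchanges two neighboring entries and leaves the relative order of every other pair untouched, so it changes the number of inversions by exactly $\pm 1$. Since the identity has zero inversions and $\pi$ has $\Inv(\pi)$ inversions, any sequence of adjacent transpositions taking $\mathrm{id}$ to $\pi$ must have length at least $\Inv(\pi)=d_K(\mathrm{id},\pi)$; taking the minimum gives the inequality.

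For the reverse inequality $d_K'(\mathrm{id},\pi)\le d_K(\mathrm{id},\pi)$, I would exhibit an explicit sorting procedure: as long as $\pi$ is not the identity it contains a descent, i.e.\ an index $i$ with $\pi_i>\pi_{i+1}$, and swapping those two adjacent entries via $(i,i+1)$ strictly decreases the inversion count by one while producing a valid permutation. Repeating this (a bubble-sort argument) reaches the identity in exactly $\Inv(\pi)$ steps, which shows $\pi$ can be transformed into $\mathrm{id}$ using $\Inv(\pi)$ adjacent transpositions; by the symmetry of $d_K'$ (Lemma \ref{lem:d_K'}\eqref{part:sym}) the same count transforms $\mathrm{id}$ into $\pi$. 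Combining the two inequalities yields $d_K'(\mathrm{id},\pi)=\Inv(\pi)=d_K(\mathrm{id},\pi)$, and undoing the right-multiplication gives the general statement.

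The main obstacle is the careful bookkeeping in the two inequalities rather than any conceptual difficulty: I must verify precisely that an adjacent transposition changes the inversion count by exactly one (the $\ge$ direction) and that a descent always exists in a non-identity permutation so the sorting terminates in the right number of steps (the $\le$ direction). The reduction to the identity via right-invariance is the key simplification that makes both of these local computations clean, since it replaces the two-permutation comparison in $d_K$ with a single inversion count.
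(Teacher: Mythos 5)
Your proof is correct, and its skeleton is the same as the paper's: both arguments use right-invariance of $d_K$ (Lemma \ref{lem:KTrightinvariant}) and of $d_K'$ (Lemma \ref{lem:d_K'}) to reduce to the case where one of the two permutations is the identity. The difference is what happens next: the paper settles the identity case by citing Theorem 1 of \cite{ck69}, whereas you prove it from scratch, identifying $d_K(e,\pi)$ with the inversion number $\Inv(\pi)$ and then showing $d_K'(e,\pi)=\Inv(\pi)$ by a two-sided argument (each adjacent transposition moves the inversion count by exactly $\pm 1$, giving the lower bound; bubble sort at descents, together with the symmetry part of Lemma \ref{lem:d_K'}, gives the upper bound). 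So your version is self-contained and elementary, essentially reproving the classical fact that the paper outsources to a reference, at the cost of length. One bookkeeping point: the paper defines $d_K'$ via left multiplication, $\tau_n\cdots\tau_1\sigma=\rho$, so applying $(i,i+1)$ swaps the \emph{values} $i$ and $i+1$ wherever they sit, not two neighboring entries as in your local computation. This costs you nothing in the reduced case, because swapping the values $i$ and $i+1$ also changes $\Inv$ by exactly $\pm 1$ (every other value compares the same way with $i$ as with $i+1$), and either convention computes the minimal length of a word in adjacent transpositions whose product is $\pi$; but the sentence justifying the $\pm 1$ step should be worded for value swaps so that it matches the definition in force.
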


 \begin{proof} 
 Theorem 1 in ~\cite{ck69} shows that for any permutation $\tau\in S_n,$ we have $d_K(\tau,e)=d_K'(\tau,e)$, where $e$ is the identity permutation. This, together with Lemmas \ref{lem:KTrightinvariant} and \ref{lem:d_K'}, imply 
\[d_K(\sigma, \rho)=d_K(e, \rho\sigma^{-1})=d_K( \rho\sigma^{-1},e)=d_K'( \rho\sigma^{-1},e)=d_K'(e, \rho\sigma^{-1})=d_K'(\sigma, \rho).\qedhere\] 

\end{proof}

\begin{remark}
The definition of the Kendall-Tau metric varies among different sources, although most often the definitions  are equivalent. 
For example, Diaconis (p.112, \cite{d88}) defines the Kendall-Tau distance between permutations $\pi$ and $\sigma$  as follows: 
\[ I(\pi, \sigma)  = \text{minimum number of pairwise adjacent transpositions taking } \pi^{-1} \text{ to }\sigma^{-1}.\] 
This definition is equivalent to the one we present in  Definition~\ref{def:metrics} and $d'_K(\sigma, \rho)$. 
The same definition appears in \cite{dg77}, and is named after Kendall based on work in the 1930's and beyond \cite{kg90}. Some ambiguity arises since Kendall defined a metric on rankings, and rankings can be transformed into permutations in two different ways. A non-equivalent definition of Kendall-Tau distance between permutations is often used in rank modulation applications in the area of coding for flash memory storage (see, for example \cite{bm10}). 
\end{remark} 

For a given set $S$ of permutations, we will consider the pair-wise distances between distinct permutations in the set as well as the maximum and minimum values attained. 

\begin{definition} For a metric $d$ on a set $S$, let $d(S)$ be the set of positive integers defined as follows: \[ d(S)=\{d(\sigma, \rho)|\sigma, \rho\in S, \sigma\neq \rho\}.\]
We will denote the minimum and maximum of the set $d(S)$ as $\min(d(S))$ and $\max(d(S))$, respectively.  \end{definition}
When $S=S_n$, it is straightforward to compute the values of $\min(d(S))$ and $\max(d(S))$ for the Hamming, $\ell_\infty$, and Kendall-Tau metrics, as we show in Proposition \ref{prop:Sn}. In Section \ref{sec:maxminPeakSet}, we consider the same question for 
subsets of $S_n$ defined by their common peak set.
\begin{proposition} \label{prop:Sn}
For $S_n$ with $n\geq2$, the minimum and maximum for each of the three metrics in Definition \ref{def:metrics} are
\begin{itemize}
    \item $\min(d_H(S_n))=2$, $\max(d_H(S_n))=n$
    \item  $\min(d_\ell(S_n))=1$, $\max(d_\ell(S_n))=n-1$
    \item $\min(d_K(S_n))=1$, $\max(d_K(S_n))= \binom{n}{2}$.
\end{itemize}
\end{proposition}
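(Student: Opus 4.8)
The plan is to verify all six quantities by the same two-part recipe: for each metric I would establish a bound valid for \emph{every} pair of distinct permutations, and then exhibit an explicit pair attaining it. The upper bounds for the maxima and the easy half of the minima come essentially for free from the ambient structure of $S_n$, so the extremal examples do most of the work.

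For the Hamming metric I would first record the trivial bound $d_H(\sigma,\rho)\le n$, since there are only $n$ coordinates. The substantive point is the lower bound $\min(d_H(S_n))=2$: two distinct permutations cannot differ in exactly one coordinate, because if $\sigma$ and $\rho$ agreed on every index except some $i$, then $\{\sigma_i\}=\{1,\dots,n\}\setminus\{\sigma_j:j\ne i\}=\{1,\dots,n\}\setminus\{\rho_j:j\ne i\}=\{\rho_i\}$, forcing $\sigma_i=\rho_i$ and hence $\sigma=\rho$. Thus $d_H\ge 2$ for distinct permutations, with equality realized by the identity $e=12\cdots n$ and the permutation obtained by swapping its first two entries. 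The value $n$ is attained by $e$ together with any derangement, which exists for $n\ge 2$ (for instance the cyclic shift $23\cdots n1$).

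For the $\ell_\infty$ metric, every entry lies in $\{1,\dots,n\}$, so $|\sigma_i-\rho_i|\le n-1$ and hence $d_\ell\le n-1$; I would attain this with any pair placing $1$ and $n$ in swapped positions, such as $e$ and the reversal $n(n-1)\cdots 1$. The bound $d_\ell\ge 1$ is immediate since distinct permutations differ somewhere, and it is attained by $e$ and the transposition of its first two entries, whose differing values $1$ and $2$ contribute $|1-2|=1$. For the Kendall--Tau metric, there are only $\binom{n}{2}$ index pairs $(i,j)$ with $i<j$, so $d_K\le\binom{n}{2}$, with equality for $e$ and its reversal, where every such pair is deranged. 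For the minimum I would invoke the swap description: by Proposition \ref{prop:KTequiv}, $d_K=d_K'$, and a single adjacent transposition is both necessary (since $\sigma\ne\rho$) and sufficient to realize distance $1$.

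The only step that is not purely formal is the Hamming lower bound, which rests on the observation that a permutation is determined by its values on all but one position; the remaining assertions reduce to recording the obvious global bound for each metric and checking one or two explicit extremal pairs, which can be taken to be the identity paired with an adjacent transposition, the reversal, or a cyclic shift.
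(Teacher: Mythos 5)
Your proof is correct and takes essentially the same approach as the paper: a global bound for each metric combined with the same extremal pairs (the identity with an adjacent transposition, the reversal $n(n-1)\cdots 1$, and the cyclic shift $23\cdots n1$). The only differences are cosmetic: you spell out why distinct permutations must differ in at least two positions, which the paper merely asserts, and you invoke $d_K=d_K'$ from Proposition \ref{prop:KTequiv} for the Kendall--Tau minimum where the paper directly exhibits the pair $12\cdots n$, $213\cdots n$ with a single deranged pair.
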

\begin{proof}
For the Hamming metric, the minimum possible value $\min(d_H(S_n))$ is 2 because distinct permutations must differ in at least 2 indices. This minimum distance is achieved by, e.g., the pair $\sigma=123\cdots n$ and $\rho=213\cdots n$. The maximum distance occurs when all indices of $\sigma$ and $\rho$ are different, for example, with $\sigma=12\cdots n$ and $\rho=2 \, 3 \, \cdots n \, 1$, so $\max(d_H(S_n))=n$.

For the $\ell_\infty$-metric, the minimum possible value of $\min(d_\ell(S_n))$ is 1, which is achieved by, e.g., the pair $\sigma=12\cdots n$ and $\rho=213\cdots n$. The maximum possible value of $\max(d_\ell(S_n))$ would be $n-1$, which occurs when $\sigma_i=n$ and $\rho_i=1$ (or vice-versa) for some index $i$. The pair $\sigma=12\cdots n$ and $\rho=n \, n-1\, \cdots 1$ achieves this maximum. 

For Kendall-Tau metric, the minimum distance occurs when the least number of pairs $(i,j)$ such that $1\leq i <j \leq n$ and $(\sigma_i-\sigma_j)(\rho_i-\rho_j)<0$ is obtained. The least number of pairs $(i,j)$ possible is 1, and this occurs when $\sigma=12\cdots n$ and $\tau=213\cdots n$. The maximum distance occurs when all $\binom{n}{2}$ pairs $(i,j)$ with $1\leq i <j \leq n$ satisfy $(\sigma_i-\sigma_j)(\rho_i-\rho_j)<0$. This happens when $\sigma=12\cdots n$ and $\rho=n \, n-1 \, \cdots 1$.
\end{proof}

\section{Maximum and minimum distances among permutations with the same peak set} \label{sec:maxminPeakSet}

For the remainder of this paper, we will explore the maximum and minimum values of the three metrics described in Definition \ref{def:metrics} in sets of permutations with the same peak set. Recall that for any set $S\subseteq [n]$ of indices
\[P(S;n)=\{\sigma \in S_n \, | \, \Peak(\sigma)=S\}.\]
We say $S$ is \textbf{admissible} if $P(S;n)\neq \emptyset$.

In Proposition \ref{prop:minPS} we explore $\min(d_K(P(S;n)),\min(d_\ell(P(S;n)),$ and $\min(d_H(P(S;n))$ for admissible sets $S$ and in Theorems \ref{thm:maxK}, \ref{thm:maxell}, and \ref{thm:maxH} we explore the equivalent problem for maximum values. The following lemma is useful for subsequent results.
\begin{lemma}[{\cite[Lemma 4.4]{s05}}]\label{lem:swap}
Let $S$ be an admissible set and $\sigma \in P(S;n)$. For any $i \in \{2,3,\ldots, n-1\}$, if $i$ and $i+1$ do not appear consecutively in $\sigma$ then swapping $i$ and $i+1$ creates a permutation $\sigma'$ with the same peak set as $\sigma$, i.e., $\sigma' \in P(S;n)$. If $i=1$ then swapping $1$ and $2$ will produce a permutation with the same peak set as $\sigma$.
\end{lemma}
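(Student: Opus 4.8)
The plan is to reduce everything to how the swap affects the sequence of ascents and descents, since the peak set is completely determined by this pattern: there is a peak at position $j$ exactly when position $j-1$ is an ascent ($\sigma_{j-1}<\sigma_j$) and position $j$ is a descent ($\sigma_j>\sigma_{j+1}$). Write $\sigma'$ for the permutation obtained from $\sigma$ by interchanging the values $i$ and $i+1$ wherever they occur. The key local fact I would prove first is: for adjacent positions $j,j+1$, the comparison between $\sigma_j$ and $\sigma_{j+1}$ agrees with the comparison between $\sigma'_j$ and $\sigma'_{j+1}$ unless $\{\sigma_j,\sigma_{j+1}\}=\{i,i+1\}$. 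The reason is that $i$ and $i+1$ differ by one, so any value distinct from both is either at most $i-1$ (below both) or at least $i+2$ (above both); its order relative to each of $i$ and $i+1$ is therefore identical, and the interchange cannot flip it. Only the pair $\{i,i+1\}$ itself has its order reversed.

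With this fact in hand, the first case is immediate. If $2\le i\le n-1$ and $i,i+1$ do not appear consecutively in $\sigma$, then the values $i$ and $i+1$ never occupy adjacent positions, so by the local fact no ascent or descent changes. The descent pattern of $\sigma'$ equals that of $\sigma$, hence $\Peak(\sigma')=\Peak(\sigma)$ and $\sigma'\in P(S;n)$.

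For the case $i=1$, if $1$ and $2$ are non-adjacent the argument above applies verbatim. The only remaining situation is when $1$ and $2$ occupy adjacent positions, say $p$ and $p+1$; here the local fact shows that exactly one comparison flips, namely the one at index $p$. Since a peak at position $j$ is governed by the comparisons at indices $j-1$ and $j$, only peaks at positions $p$ and $p+1$ can possibly be affected. I would then check directly that neither position is a peak before or after the swap: the value $1$ is the global minimum and can never exceed a neighbor, the value $2$ exceeds only $1$, and the outer neighbors at positions $p-1$ and $p+2$ (whenever they exist) carry values at least $3$. Consequently no peak can form at $p$ or at $p+1$ in either $\sigma$ or $\sigma'$, so again $\Peak(\sigma')=\Peak(\sigma)$.

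The substantive step is this last, adjacent, case, where a comparison genuinely changes and one must argue that the change is invisible to the peak set; everywhere else the descent pattern is literally unchanged. This is precisely where the hypothesis $i=1$ is essential rather than an arbitrary $i$: for large consecutive values such as $n-1$ and $n$ the analogous flip can create or destroy a peak, so the proof must exploit that $1$ and $2$ are too small either to be a peak themselves or to border one.
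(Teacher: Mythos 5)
Your proof is correct. Note, however, that the paper does not prove this lemma at all: it is quoted directly from the literature (Lemma 4.4 of the cited reference \cite{s05}), so there is no internal argument to compare against. Your write-up therefore supplies something the paper deliberately omits, namely a self-contained elementary proof. The structure is sound: reducing the peak set to the ascent--descent pattern, isolating the local fact that swapping the values $i$ and $i+1$ can only flip the comparison between two \emph{adjacent} positions when those positions hold exactly the values $i$ and $i+1$ (since every other value is either $\leq i-1$ or $\geq i+2$ and hence compares identically to both), and then handling the one genuinely delicate case --- $1$ and $2$ adjacent --- by observing that the value $1$ can never be a peak and the value $2$ is dominated by any outer neighbor, which necessarily carries a value at least $3$ (and that positions $1$ and $n$ are excluded from being peaks by definition). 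Your closing remark that the restriction to $i=1$ in the adjacent case is essential, because swapping adjacent large values such as $n-1$ and $n$ can create or destroy a peak, correctly identifies where the hypothesis is used; this is exactly the distinction the lemma's two cases encode. The only thing the paper's citation buys over your argument is brevity; your version buys the reader a complete and verifiable proof.
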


\begin{proposition}\label{prop:minPS}
Given an admissible peak set $S$ and $P=P(S;n)$ for $n\geq 2$, we have
\[\min(d_H(P))=2,\quad\min(d_\ell(P))=1,\text{ and  }\min(d_K(P))=1.\] 
\end{proposition}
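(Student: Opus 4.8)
The plan is to separate the lower bounds from the matching constructions. For the lower bounds, observe that $P = P(S;n) \subseteq S_n$, so every distance realized between distinct elements of $P$ is also realized in $S_n$; hence $d(P) \subseteq d(S_n)$ and therefore $\min(d(P)) \geq \min(d(S_n))$ for each of the three metrics. Proposition \ref{prop:Sn} then immediately gives $\min(d_H(P)) \geq 2$, $\min(d_\ell(P)) \geq 1$, and $\min(d_K(P)) \geq 1$. (Concretely, these are forced because two distinct permutations cannot differ in exactly one position, must differ in at least one position, and must have at least one deranged pair.)

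For the upper bounds I would exhibit a single pair of distinct permutations in $P$ that simultaneously realizes all three minima. Since $S$ is admissible, choose any $\sigma \in P$ and let $\sigma'$ be the permutation obtained from $\sigma$ by swapping the values $1$ and $2$ in one-line notation. Because $1$ and $2$ occupy distinct positions, $\sigma' \neq \sigma$, and by the $i=1$ clause of Lemma \ref{lem:swap} --- which imposes no adjacency hypothesis --- we have $\sigma' \in P$. Thus $\sigma, \sigma'$ is a valid pair of distinct elements of $P$.

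It then remains to compute the three distances for this pair. The permutations $\sigma$ and $\sigma'$ agree everywhere except at the two positions formerly holding $1$ and $2$, so $d_H(\sigma,\sigma') = 2$; and at those two positions the entries differ by $|1-2| = 1$ while all other entries agree, so $d_\ell(\sigma,\sigma') = 1$. For the Kendall--Tau distance I would argue that swapping two consecutive values creates exactly one deranged pair: the only positions whose entries change are the two carrying $1$ and $2$, and every other entry is either less than $1$ or greater than $2$ (no value lies strictly between them), so its relative order with respect to both swapped values is unchanged; only the pair of swapped positions is deranged, giving $d_K(\sigma,\sigma') = 1$. Combining these three computations with the lower bounds of the first paragraph yields the claimed equalities.

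I do not anticipate a serious obstacle; the one step needing a little care is the Kendall--Tau computation, where the essential point is that no value lies strictly between the consecutive values $1$ and $2$, so their swap deranges only the single pair of positions they occupy. It is worth emphasizing that the same pair $(\sigma,\sigma')$ witnesses the minimum for all three metrics at once, which is exactly why the unconditional $i=1$ case of Lemma \ref{lem:swap} is the right tool here.
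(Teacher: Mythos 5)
Your proposal is correct and follows essentially the same route as the paper: lower bounds from $P(S;n)\subseteq S_n$ together with Proposition \ref{prop:Sn}, and the matching construction via the $i=1$ case of Lemma \ref{lem:swap}, swapping the values $1$ and $2$ in any $\sigma\in P$ to realize all three minima with a single pair. Your explicit justification of the Kendall--Tau count (no value lies strictly between $1$ and $2$, so only the swapped pair is deranged) merely spells out a step the paper states without elaboration.
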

\begin{proof}
For any set $S$, by definition we have that $P(S;n)\subseteq S_n$. Hence, by Proposition \ref{prop:Sn} the minimum value of  $\min(d_H(P))$ is at least 2 and for both $\min(d_\ell(P))$ and $\min(d_K(P))$ it is at least 1. Hence, it is enough to find pairs of permutation in $P$ that attain these values. 

Let $S$ be admissable and $\sigma$ be any permutation in $P(S;n)$. By Lemma \ref{lem:swap}, swapping $1$ and $2$ in $\sigma$ will lead to a permutation $\sigma'$ with the same peak set as $\sigma$. Since $\sigma$ and $\sigma'$ only differ in the indices where $1$ and $2$ are located, $d_H(\sigma,\sigma')=2$. Using the same $\sigma$ and $\sigma'$ we see that $d_\ell(\sigma,\sigma')=1$ as the only indices in which they differ have entries $1$ and $2$ and $|2-1|=|1-2|=1$. Finally, for the Kendall-Tau metric, we get that $d_K(\sigma,\sigma')=1$ as the only deranged pair between $\sigma$ and $\sigma'$ is the pair of indices where $1$ and $2$ are located.
\end{proof}

We now proceed to explore the maximum values of the metrics when restricted to sets of permutations with the same peak set. Proposition \ref{prop:Sn} bounds the values of $\max(d_H(P(S;n)))$, $\max(d_\ell(P(S;n)))$, and $\max(d_K(P(S;n)))$ by $n,n-1,$ and $\binom{n}{2}$, respectively, as $P(S;n)\subseteq S_n$. In the main results of this section, Theorems \ref{thm:maxK}, \ref{thm:maxell}, and \ref{thm:maxH}  we show that these values are not always attained in the sets $P(S;n)$. Throughout the next results, we will use two particular permutations as our starting point to create others. 

\begin{definition}
\label{def:e} 
Let $\mathbf{e}$ be the identity permutation $\mathbf{e}=1\,2\cdots n-1\, n$ and $\mathbf{e}^*=n\, n-1\cdots 2\,1$. For an admissible peak set $S$, define $\mathbf{e}[S]$ as the permutation obtained by swapping the entries of $k$ and $k+1$ in $\mathbf{e}$ for each $k \in S$. Similarly, let $\mathbf{e}^*[S]$ be the permutation obtained by swapping the entries of $k-1$ and $k$ in $\mathbf{e}^*$, for each $k\in S$. Since any admissible set $S$ has no consecutive entries, these permutations are well-defined as the order of the swaps does not matter. More explicitly, we have that 
for $i\in \{1, 2, \ldots, n\}$,
\[\textbf{e}[S]_i=\begin{cases} 
i+1& \text{ if } i \in S  \\
i-1 &\text{ if } i \in  \{s+1 \, | \, s \in S\}\\
i &  \text{ otherwise,} \end{cases}
\text{ and }
\textbf{e}^*[S]_i=\begin{cases} 
(n+1-i)+1& \text{ if } i \in S  \\
(n+1-i)-1 &\text{ if } i \in  \{s-1 \, | \, s \in S\}\\
n+1-i &  \text{ otherwise.} \end{cases}\]
\end{definition}

For example, for the set $S=\{2,5,7\}$ and $S_9$, we have $\textbf{e}[S]=132465879$ and $\textbf{e}^*[S]=897563421.$

\begin{theorem}\label{thm:maxK}
For $n\geq 2$, the maximum Kendall-Tau distance between permutations in $P(S;n)$ is $\binom{n}{2}-2|S|$.
\end{theorem}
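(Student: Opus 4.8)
My plan is to prove the claimed value by sandwiching it between a matching upper and lower bound, with the extremal pair being $\mathbf{e}[S]$ and $\mathbf{e}^*[S]$ from Definition~\ref{def:e}.

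For the \textbf{upper bound}, I would fix arbitrary $\sigma,\rho\in P(S;n)$ and argue that each peak forces two pairs of positions to fail to be deranged. Indeed, for any $i\in S$ both permutations satisfy $\sigma_{i-1}<\sigma_i>\sigma_{i+1}$ and $\rho_{i-1}<\rho_i>\rho_{i+1}$. Hence on the pair $(i-1,i)$ we have $\sigma_{i-1}-\sigma_i<0$ and $\rho_{i-1}-\rho_i<0$, so $(\sigma_{i-1}-\sigma_i)(\rho_{i-1}-\rho_i)>0$ and $(i-1,i)$ is \emph{not} a deranged pair; symmetrically, on $(i,i+1)$ both differences are positive, so $(i,i+1)$ is not deranged either. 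The key combinatorial point is that these $2|S|$ pairs are all distinct: since an admissible $S$ contains no two consecutive integers, the pairs $\{(i-1,i),(i,i+1):i\in S\}$ are pairwise distinct (a coincidence $(i,i+1)=(i'-1,i')$ would force $i'=i+1$ with $i,i+1\in S$). Because $d_K(\sigma,\rho)$ counts deranged pairs among all $\binom{n}{2}$ pairs, this yields $d_K(\sigma,\rho)\le \binom{n}{2}-2|S|$ for every admissible pair, giving the upper bound.

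For the \textbf{lower bound (achievability)}, I would compute $d_K(\mathbf{e}[S],\mathbf{e}^*[S])$ exactly by determining, for each pair of positions $(i,j)$ with $i<j$, whether the two permutations order $i,j$ the same way. The permutation $\mathbf{e}[S]$ is the identity with the disjoint adjacent swaps at positions $(k,k+1)$ for $k\in S$; since every entry moves by at most $1$ and the swapped blocks are disjoint, $\mathbf{e}[S]$ is ascending on every pair \emph{except} the $|S|$ pairs $A=\{(k,k+1):k\in S\}$, on which it is descending. Dually, $\mathbf{e}^*[S]$ is the reverse permutation $\mathbf{e}^*$ with the disjoint adjacent swaps at positions $(k-1,k)$, so it is descending on every pair \emph{except} the $|S|$ pairs $B=\{(k-1,k):k\in S\}$, on which it is ascending. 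A pair is non-deranged exactly when the two orderings agree, i.e.\ when it lies in $(A\setminus B)\cup(B\setminus A)$; since $A\cap B=\emptyset$ (again by non-consecutiveness of $S$) this set is $A\cup B$, of size $2|S|$. Therefore $d_K(\mathbf{e}[S],\mathbf{e}^*[S])=\binom{n}{2}-2|S|$, matching the bound.

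The step I expect to be the \textbf{main obstacle} is the careful verification in the achievability argument that \emph{no pair outside} $A\cup B$ accidentally becomes non-deranged, i.e.\ that the adjacent transpositions defining $\mathbf{e}[S]$ and $\mathbf{e}^*[S]$ flip the order of only the intended pairs. This reduces to showing that $\mathbf{e}[S]$ strictly ascends on every non-swapped pair and $\mathbf{e}^*[S]$ strictly descends on every non-swapped pair. The crux is the small-case analysis for pairs $(i,j)$ with $j=i+2$: here one must rule out equality of perturbed values, which fails precisely because $i$ and $i+1$ cannot both lie in $S$. I would isolate this as a short claim about the order type of each construction, so that the two bounds then combine immediately. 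I would also record at the outset (directly from the explicit formulas in Definition~\ref{def:e}) that $\mathbf{e}[S],\mathbf{e}^*[S]\in P(S;n)$, so that they are legitimate competitors for the maximum.
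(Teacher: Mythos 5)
Your proposal is correct and takes essentially the same route as the paper: the identical upper bound (each peak $i\in S$ forces the pairs $(i-1,i)$ and $(i,i+1)$ to be non-deranged) and the identical extremal pair $\mathbf{e}[S],\mathbf{e}^*[S]$, for which the paper likewise verifies that exactly the $2|S|$ adjacent pairs fail to be deranged, yielding $\binom{n}{2}-2|S|$. Your explicit checks that these $2|S|$ pairs are pairwise distinct and that no other pair changes order (both via the non-consecutiveness of admissible $S$) are details the paper leaves implicit, but they do not constitute a different approach.
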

\begin{proof} 
For any pair of permutations $\sigma$ and $\rho$ in $P(S;n)$, we have that $\sigma_{i-1}<\sigma_i>\sigma_{i+1}$ for each $i \in S$, and analogously for $\rho$. Therefore, the pairs $(i-1, i)$ and $(i, i+1)$ are not deranged pairs for $\sigma,\rho$ since
\[ (\sigma_{i-1}-\sigma_{i})(\rho_{i-1}-\rho_{i})>0 \;\text{ and }\;(\sigma_{i}-\sigma_{i+1})(\rho_{i}-\rho_{i+1})>0.\]
Since there are a total of $\binom{n}{2}$ pairs of possible deranged pairs, we have  that $d_{K}(\sigma, \rho)\leq \binom{n}{2}-2|S|$.

We now consider a pair of permutations that attain the bound  $\binom{n}{2}-2|S|$. First, note that the permutations $\textbf{e}$ and $\textbf{e}^*$ are Kendall-Tau distance $\binom{n}{2}$ apart since for all index pairs $(i,j)$ with $1\leq i<j\leq n$, $\textbf{e}$ has $(\textbf{e}_i-\textbf{e}_j)<0$ while $\textbf{e}^*$ has $(\textbf{e}^*_i-\textbf{e}^*_j)>0$. Consider $\textbf{e}[S]$ and $\textbf{e}^*[S]$ as defined in Definition~\ref{def:e}. We claim $d_K(\mathbf{e}[S],\mathbf{e}^*[S])=\binom{n}{2}-2|S|$.

Since every pair of indices $(i,j)$ with $1\leq i<j\leq n$ was a deranged pair for $\mathbf{e}$ and $\mathbf{e}^*$ and we only altered the (consecutive) entries in indices $k-1$ and $k$ in $\mathbf{e}[S]$, and (consecutive) entries in indices $k$ and $k+1$ in $\mathbf{e}^*[S]$ for $k \in S$, then only the pairs $(k-1,k)$, and $(k, k+1)$ might no longer be deranged pairs for  $\mathbf{e}[S]$ and $\mathbf{e}^*[S]$. Indeed, for the pair $(k, k+1)$,
\begin{align*}
     (\mathbf{e}[S]_k-\mathbf{e}[S]_{k+1})&(\mathbf{e}^*[S]_k-\mathbf{e}^*[S]_{k+1})\\
     &=(k+1 - k)((n+1-k)+1-(n+1-(k+1))=2>0.  
\end{align*}
Similarly, for the pair $(k-1, k)$ we have 
\begin{align*}
    (\mathbf{e}[S]_{k-1}-\mathbf{e}[S]_k)&(\mathbf{e}^*[S]_{k-1}-\mathbf{e}^*[S]_k)\\
    &= (k-1-(k+1))(n+1-(k-1)-1-((n+1-k)+1))=2>0.\end{align*}
Thus, for each peak in $S$ we have two pairs that are not deranged, hence $d_K(\mathbf{e}[S],\mathbf{e}^*[S])=\binom{n}{2}-2|S|$. 

\end{proof}

\begin{theorem}\label{thm:maxell}
For $n\geq 2$, the maximum $\ell_\infty$ distance between permutations in $P(S;n)$ is $n-2$ when $S$ contains peaks at indices $2$ and $n-1$, and $n-1$ otherwise.
\end{theorem}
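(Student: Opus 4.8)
The plan is to treat the two regimes in the statement separately, using the pair $\mathbf{e}[S]$ and $\mathbf{e}^*[S]$ from Definition \ref{def:e} (already known to lie in $P(S;n)$, as used in the proof of Theorem \ref{thm:maxK}) as the extremal pair in both. Since $P(S;n)\subseteq S_n$, Proposition \ref{prop:Sn} already gives $\max(d_\ell(P(S;n)))\le n-1$, so in every case it remains only to decide whether this bound is attained and, when it is not, to pin the value at $n-2$. The reduction I would start from is that $d_\ell(\sigma,\rho)=n-1$ forces some index $i$ with $\{\sigma_i,\rho_i\}=\{1,n\}$, so the whole question becomes: which positions can carry the value $n$, and which can carry the value $1$, for permutations in $P(S;n)$? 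I would record two facts. First, if $\sigma_i=n$ with $2\le i\le n-1$ then $i$ is automatically a peak, so $i\in S$; moreover $\sigma_1=n$ forces $2\notin S$ and $\sigma_n=n$ forces $n-1\notin S$, since placing $n$ at an endpoint destroys the peak at its unique neighbor. Second, $1$ is minimal, so a position holding $1$ is never a peak; hence $\rho_i=1$ forces $i\notin S$.

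For the \emph{otherwise} case (where $2\notin S$ or $n-1\notin S$) I would exhibit the distance $n-1$ directly from the endpoint entries of $\mathbf{e}[S]$ and $\mathbf{e}^*[S]$. Reading the explicit formulas in Definition \ref{def:e}, one has $\mathbf{e}[S]_1=1$ and $\mathbf{e}^*[S]_n=1$ unconditionally, while $\mathbf{e}^*[S]_1=n$ exactly when $2\notin S$ and $\mathbf{e}[S]_n=n$ exactly when $n-1\notin S$. Thus whichever of $2,n-1$ is missing from $S$ yields an endpoint index realizing $|\,\cdot-\cdot\,|=n-1$, giving $d_\ell(\mathbf{e}[S],\mathbf{e}^*[S])\ge n-1$, which combined with the upper bound settles the value at $n-1$.

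For the case $2\in S$ and $n-1\in S$, the two recorded facts combine to give the refined upper bound: any position carrying $n$ must lie in $S$ (the endpoint options are killed precisely by $2,n-1\in S$), whereas any position carrying $1$ must lie outside $S$, so no single index can host both $1$ and $n$ across a pair in $P(S;n)$, forcing $d_\ell\le n-2$. To see this is sharp I would again use $\mathbf{e}[S],\mathbf{e}^*[S]$: when $2\in S$ the formulas give $\mathbf{e}[S]_1=1$ and $\mathbf{e}^*[S]_1=n-1$, so $|\mathbf{e}[S]_1-\mathbf{e}^*[S]_1|=n-2$, and hence $d_\ell(\mathbf{e}[S],\mathbf{e}^*[S])=n-2$ in this regime.

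The main obstacle is the refined upper bound in the case $2,n-1\in S$, and specifically the clean dichotomy ``$n$ must sit at a peak index, $1$ can never sit at a peak index.'' Handling the endpoint sub-cases correctly—why $\sigma_1=n$ and $\sigma_n=n$ are each incompatible with $2,n-1\in S$—is the only place where both hypotheses are genuinely used, and it is the step that deserves the most care; the matching constructions are then routine evaluations of the formulas in Definition \ref{def:e}.
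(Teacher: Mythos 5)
Your proposal is correct and follows essentially the same route as the paper's proof: both cases are settled with the pair $\mathbf{e}[S],\mathbf{e}^*[S]$, the \emph{otherwise} case via the endpoint entries, and the case $2,n-1\in S$ via exactly the paper's dichotomy that $n$ must occupy a peak index while $1$ never can, so no index can host both. Your version merely makes the endpoint sub-cases and the evaluations of Definition~\ref{def:e} more explicit.
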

\begin{proof}
 First consider the case when  $\{2,n-1\} \not\subseteq S$. If $2\notin S$, then the permutations $\mathbf{e}[S]$ and $\mathbf{e}^*[S]$ achieve the maximum $\ell_\infty$-distance $n-1$ as $\mathbf{e}[S]_1=1$ and $\mathbf{e}^*[S]_1=n$. Similarly, if $n-1\not \in S$ then $\mathbf{e}[S]_n=n$ and $\mathbf{e}^*[S]_n=1$, and therefore $d_{\ell}(\mathbf{e}[S], \mathbf{e}^*[S])=n-1$. 

If $\{2, n-1\}\subseteq S$, then we first claim $d_\ell(\sigma,\rho)\leq n-2$ for any pair of distinct permutations $\sigma,\rho \in P(S;n)$. In any permutation in $P(S;n)$, $n$ must not appear in index $1$ nor index $n$ since indices $2$ and $n-1$ are peaks. Since $n$ is larger than any other number in the permutation, it must be in an index that is a peak. On the other side, $1$ will never appear in an index that is a peak. Hence, $d_\ell(\sigma,\rho)$ cannot be $n-1$ as the only way to obtain this would be for $n$ and $1$ to appear in the same index in $\sigma$ and $\rho$, respectively. Thus, $d_\ell(\sigma,\rho)\leq n-2$. 
To show that this bound is achieved, consider the permutations $\mathbf{e}[S]$ and $\mathbf{e}^*[S]$. Since $\mathbf{e}[S]_1=1$ and $\mathbf{e}^*[S]_1=n-1$, then $d_\ell(\mathbf{e}[S],\mathbf{e}^*[S])=n-2$.
\end{proof}

The next result considers the maximum Hamming distance between permutations with the same peak set in $S_n$ for $n\geq 4$. We first remark that for $n=2$, the only peak set is $\emptyset$ and $\max d_H(P(\emptyset;2))=2$, and for $n=3$, we have that $\max d_H(P(\emptyset;3))=3$ and $\max d_H(P(\{2\};3))=2$.
\begin{theorem}\label{thm:maxH}
For $n\geq 4$ and any admissible peak set $S$, the maximum Hamming distance between  permutations in $P(S;n)$  is $n$.  
\end{theorem}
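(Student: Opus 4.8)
The plan is to get the upper bound for free and then construct a pair of permutations in $P(S;n)$ that disagree in every coordinate. Since $P(S;n)\subseteq S_n$, Proposition~\ref{prop:Sn} immediately gives $d_H(\sigma,\rho)\le n$ for all $\sigma,\rho\in P(S;n)$, so the entire task is to exhibit $\sigma,\rho\in P(S;n)$ with $\sigma_i\ne\rho_i$ for every $i$; that is, a pair at Hamming distance exactly $n$.

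The first candidate is the pair $\mathbf{e}[S]$, $\mathbf{e}^*[S]$ from Definition~\ref{def:e}, which already realized the extremal values in Theorems~\ref{thm:maxK} and~\ref{thm:maxell}. From the explicit formulas there, $\mathbf{e}[S]_i\in\{i-1,i,i+1\}$ and $\mathbf{e}^*[S]_i\in\{n-i,\,n+1-i,\,n+2-i\}$, so any coincidence $\mathbf{e}[S]_i=\mathbf{e}^*[S]_i$ forces $i$ into a width-two window about $(n+1)/2$. First I would carry out the short case analysis according to whether $i$, $i-1$, or $i+1$ belongs to $S$ (using that $S$ has no two consecutive elements). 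The conclusion I expect is that $\mathbf{e}[S]$ and $\mathbf{e}^*[S]$ agree in at most one coordinate, and such an agreement can occur only at the central index. Whenever there is no agreement, $\sigma=\mathbf{e}[S]$ and $\rho=\mathbf{e}^*[S]$ already give $d_H=n$ and we are done.

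The remaining case, in which the two base permutations coincide in exactly one central column $c$ and so give only $d_H=n-1$, is the heart of the argument. Here I would repair the pair by perturbing $\mathbf{e}[S]$ while staying in $P(S;n)$, the essential tool being Lemma~\ref{lem:swap}: swapping two adjacent values that sit in non-adjacent positions (and the swap of $1$ and $2$) preserves the peak set. When some element of $S$ lies within distance two of $c$, the corresponding peak has already displaced the value $\mathbf{e}[S]_c-1$ away from the column neighbouring $c$, so a single allowed swap of $\mathbf{e}[S]_c$ with $\mathbf{e}[S]_c-1$ changes column $c$ without creating a new coincidence, and we are done. The genuinely hard subcase is when the central index lies in a long peak-free stretch (the extreme being $S=\emptyset$), for there $\mathbf{e}[S]$ is locally the identity and no single adjacent-value swap is admissible; I would instead replace $\rho$ by a cyclic-type rearrangement of that peak-free block, modelled on the pair $12\cdots n$ and $n\,1\,2\cdots(n-1)$ that works for $S=\emptyset$, and check directly that it stays in $P(S;n)$ and disagrees with $\sigma$ everywhere. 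The hypothesis $n\ge4$ is what supplies the room to perform either repair, and the exceptional small values $n=2,3$ recorded before the theorem show the room is necessary. I expect the bookkeeping in this central subcase—verifying simultaneously that the peak set is unchanged and that every column now differs—to be the main obstacle.
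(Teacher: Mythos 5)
Your strategy is genuinely different from the paper's: the paper proves the theorem by induction on $n$ (base cases $n=4,5$ given in tables; if $n-1\notin S$ it appends $n$ to a distance-$(n-1)$ pair in $P(S;n-1)$ and swaps the values $n-1,n$ in one of them; if $n-1\in S$ it lifts a pair from $P(S\setminus\{n-1\};n-2)$ by appending $n,n-1$ to one and, in the other, replacing $n-2$ by $n$ and appending $n-1,n-2$), whereas you work directly with $\mathbf{e}[S]$ and $\mathbf{e}^*[S]$ and repair their coincidences. Your first step is correct and checkable: since $\mathbf{e}[S]_i\in\{i-1,i,i+1\}$, $\mathbf{e}^*[S]_i\in\{n-i,n+1-i,n+2-i\}$, and $S$ has no two consecutive elements, the two permutations agree in at most one (central) position. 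The problem is that both of your repair mechanisms fail as written, and the second failure is a missing idea, not a bookkeeping slip.

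First, the prescribed swap of $\mathbf{e}[S]_c$ with $\mathbf{e}[S]_c-1$ is in several scenarios exactly the swap Lemma \ref{lem:swap} forbids. Take $n=5$, $S=\{3\}$: here $\mathbf{e}[S]=12435$ and $\mathbf{e}^*[S]=53421$ agree at $c=3$, but the value $\mathbf{e}[S]_3-1=3$ sits in position $4$, adjacent to position $3$, so the swap is illegal (performing it anyway yields $12345$ and kills the peak); the legal repair is to swap with $\mathbf{e}[S]_c+1=5$ instead, and in other configurations one must repair $\mathbf{e}^*[S]$ rather than $\mathbf{e}[S]$. This is fixable but shows the case analysis was not actually carried out. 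Second, and fatally: cyclically shifting an \emph{interior} peak-free block never preserves the peak set; your model $12\cdots n\mapsto n\,1\,2\cdots(n-1)$ works only because that block ends at position $1$, where no peak can be created. Take $n=9$, $S=\{2,8\}$: then $\mathbf{e}[S]=1\,3\,2\,4\,5\,6\,7\,9\,8$ and $\mathbf{e}^*[S]=8\,9\,7\,6\,5\,4\,2\,3\,1$ agree only at position $5$, no element of $S$ is within distance two of $5$, and the block containing position $5$ is bounded by peaks on both sides. No single Lemma-\ref{lem:swap} swap can alter position $5$ (the values $4,5,6$ occupy mutually adjacent positions in both permutations), and both cyclic shifts of the block fail: shifting positions $3$--$7$ of $\mathbf{e}^*[S]$ gives $8\,9\,2\,7\,6\,5\,4\,3\,1$ (peak set $\{2,4\}$) or $8\,9\,6\,5\,4\,2\,7\,3\,1$ (peak set $\{2,7\}$). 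In fact, keeping positions $1,2,8,9$ of $\mathbf{e}^*[S]$ fixed, the descending arrangement $7,6,5,4,2$ is the \emph{only} way to place those five values in positions $3$--$7$ with peak set $\{2,8\}$, so any repair must move values across the block boundary --- for instance the two successive legal swaps $3\leftrightarrow 4$ and then $4\leftrightarrow 5$ in $\mathbf{e}^*[S]$, giving $8\,9\,7\,6\,4\,3\,2\,5\,1$, do work. That cross-boundary mechanism (or the paper's induction, which sidesteps the issue entirely) is precisely what your sketch is missing.
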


\begin{proof}
We proceed by induction on $n$. For the base cases of $n=4$ and $n=5$, consider the  pairs of permutations in each of the admissible peak sets shown in Tables~\ref{tab:base-case-4} and \ref{tab:base-case-5}, respectively. Suppose that for every $4\leq j<n$ the maximum Hamming distance between permutations in $P(S;j)$ is $j$.  

\begin{table}[ht]
    \centering
    \begin{tabular}{c|c|c}
        $S=\emptyset$ & $S=\{2\}$ & $S=\{3\}$   \\ \hline
        1\,2\,3\,4 & 1\,3\,2\,4 & 1\,3\,4\,2 \\
        4\,3\,2\,1 & 2\,4\,3\,1 & 4\,2\,3\,1 
    \end{tabular}
    \caption{Pairs of permutations in $S_4$ with the same peak set and Hamming distance four.}
    \label{tab:base-case-4}
\end{table}

\begin{table}[t]
    \centering
    \begin{tabular}{c|c|c|c|c}
        $S=\emptyset$ & $S=\{2\}$ & $S=\{3\}$  & $S=\{4\}$ & $S=\{2, 4\}$ \\ \hline
        1\,2\,3\,4\,5 & 1\,3\,2\,4\,5 & 1\,3\,4\,2\,5 &4\,3\,2\,5\,1& 1\,3\,2\,5\,4\\
        5\,3\,2\,1\,4 & 2\,5\,3\,1\,4 & 5\,2\,3\,1\,4&5\,4\,1\,3\,2 & 4\,5\,1\,3\,2 
    \end{tabular}
    \caption{Pairs of permutations in $S_5$ with the same peak set and Hamming distance five.}
    \label{tab:base-case-5}
\end{table}

\begin{table}[t]
    \centering
    \begin{tabular}{c|c|c|c|c|c|c|c}
        $S=\emptyset$ & $S=\{2\}$ & $S=\{3\}$  & $S=\{4\}$ & $S=\{5\}$ &$S=\{2, 4\}$&$S=\{2, 5\}$&$S=\{3, 5\}$\\ \hline
        1\,2\,3\,4\,5\,6 & 1\,3\,2\,4\,5\,6 & 1\,3\,4\,2\,5\,6 &4\,3\,2\,5\,1\,6&1\,2\,3\,4\,6\,5 &1\,3\,2\,5\,4\,6&1\,3\,2\,4\,6\,5&1\,3\,4\,2\,6\,5\\
        6\,3\,2\,1\,4\,5 & 2\,6\,3\,1\,4\,5 & 6\,2\,3\,1\,4\,5&6\,4\,1\,3\,2\,5 & 6\,3\,2\,1\,5\,4&4\,6\,1\,3\,2\,5&2\,6\,3\,1\,5\,4&6\,2\,3\,1\,5\,4 
    \end{tabular}
    \caption{Pairs of permutations in $S_6$ with the same peak set and Hamming distance six, created using the constructions in the proof of Theorem \ref{thm:maxH}.}
    \label{tab:base-case-6}
\end{table}

Let $S$ be an admissible peak set for permutations in $S_n$, and for this case assume $n-1\notin S$. 
Since $n-1\notin S$,  $S$ is also an admissible peak set for permutations in $S_{n-1}$, so there exist permutations $\sigma, \rho\in P(S; n-1)$ such that $d_H(\sigma, \rho)=n-1$ by the inductive hypothesis. Since $\sigma$ and $\rho$ differ in every index, in at least one of the permutations $n-1$ does not appear in index $n-1$. Without loss of generality, assume $\rho_{n-1}\neq n-1.$  Construct permutations $\sigma', \rho'$  in $S_n$ as follows: 
$\sigma'$ equals $\sigma$ with $n$ appended at the end. For $\rho'$, first form an intermediate permutation $\rho''$ by appending $n$ to the end of $\rho$. Then to obtain $\rho'$, swap values $n$ and $n-1$ in $\rho''$. We claim that $d_H(\sigma', \rho')=n$ and the peak set of both $\sigma'$ and $\rho'$ is $S$. 

First recall that since $d_H(\sigma, \rho)=n-1$, we have that $d_H(\sigma', \rho'')=n-1$ since they are formed by appending $n$ to the end of each permutation. Swapping the values $n-1$ and $n$ in $\rho''$ results in the distance $d_H(\sigma', \rho')=n.$ The peak set of $\sigma'$ and $\rho''$, $S$, is inherited from $\sigma$ and $\rho$ by construction. Since $\rho_{n-1}\neq n-1$ then $n-1$ and $n$ are not neighbors in $\rho''$. By Lemma \ref{lem:swap} the peak set of $\rho'$ is the same as the peak set of $\rho''$, which is $S$. 

Now assume $S$ is an admissible peak set for permutations in $S_n$, and $n-1\in S$. Define $S'=S\setminus \{n-1\}$, which is an admissible peak set on $S_{n-2}$. By our inductive assumption there exist permutations $\sigma, \rho\in P(S';n-2)$ such that $d_H(\sigma, \rho)=n-2$. Thus, at least one of $\sigma$ or $\rho$ must have its $n-2$ index not equal to $n-2$. Without loss of generality, suppose $\rho_{n-2}\neq n-2$. Define the following permutations in $S_n$: $\sigma'$ equals $\sigma$ with values $n$ and $n-1$ appended to the end, in that order, that is, 
\[\sigma'=\sigma_1\cdots \sigma_{n-2}\, n\, n-1.\]
 Starting with $\rho$, define $\rho''$ to be the permutation $\rho$ with $n$ and $n-1$ appended to the end in that order. Let $i$ be the index such that $\rho_i=n-2$, then  $\rho''$ is of the form
\[\rho''=\rho_1\cdots \rho_{i-1}\, n-2\, \rho_{i+1} \cdots\rho_{n-2} \, n \, n-1. \]
Let $\rho'$ be 
\[\rho'=\rho_1\cdots \rho_{i-1}\, n\, \rho_{i+1} \cdots\rho_{n-2} \, n-1 \, n-2. \]
In other words, $\rho'$ equals $\rho$ with value $n-2$ replaced by $n$, and then $n-1$, $n-2$ appended in that order to the end of the permutation. By construction, $\sigma'$ and $\rho'$ differ in every index, so $d_H(\sigma', \rho')=n$. Finally, the peak set of both $\sigma'$ and $\rho'$ is $S$ as we have introduced a peak at $n-1$ and have not altered any other entry other than cyclically permuting $(n,n-1,n-2)$ in $\rho''$, which does not change the peak set. Hence, the result is proven. Table \ref{tab:base-case-6} showcases these constructions for the case $n=6$.
\end{proof}

\section{Acknowledgements}
The authors thank Villanova's Co-MaStER program. A. Diaz-Lopez's research is supported in part by National Science Foundation grant DMS-2211379.
\bibliographystyle{amsplain}
\bibliography{Reference}
\end{document}